\documentclass{article}

%Jesli uzywasz kodowania polskich znakow ISO-8859-2 nastepna linia powinna byc 
%odkomentowana

%\usepackage{polski}
%\usepackage{dsfonts}
\usepackage[british]{babel}
\usepackage{amsmath}
\usepackage{amssymb}
\usepackage{amsthm}
\usepackage{verbatim}

\numberwithin{equation}{section}

\usepackage{hyperref}
\usepackage[framemethod=tikz]{mdframed}
\usepackage[titletoc]{appendix}
%\usepackage{color}
%\usepackage[usenames, dvipsnames]{color}
%\usepackage{british}
%\usepackage[cp1250]{inputenc}
%%% fix for \lll

\usepackage{amsthm}

\newtheorem{defi}{Definition}[section]
\newtheorem{lem}[defi]{Lemma}
\newtheorem{rem}[defi]{Remark}

\newtheorem{theorem}[defi]{Theorem}

\newtheorem*{theorem*}{Theorem}
\newtheorem*{asum*}{Assumption}

\begin{document}

\title{From Hawkes-type processes to stochastic volatility}
\author{\L ukasz Treszczotko\footnote{Institute of Mathematics, University of Warsaw, Banacha 2 02-097 Warsaw}\\
\href{mailto:lukasz.treszczotko@gmail.com}{lukasz.treszczotko@gmail.com} }

\maketitle

\begin{abstract}
We introduce a Hawkes-like process and study its scaling limit as the system becomes increasingly endogenous. We derive functional limit theorems for intensity and fluctuations. Then, we introduce a high-frequency model for a price of a liquid traded financial instrument in which the nearly unstable regime leads to a Heston-type process where the negative correlation between the noise driving the proce of the instrument and the volatility can be viewed as a result of high variance of the sell-side order arrivals. 
\end{abstract}

{\bf Keywords:} 
\\

\textup{2010} \textit{Mathematics Subject Classification}: \textup{Primary: 60G55, 60F17}

\maketitle

\section{Introduction}
Given their self-exciting nature, Hawkes process have been widely used to describe the behaviour of prices of financial instruments. Lately they have been shown to provide micro-structural foundations for the Heston (see~\cite{JR1}) and rough Heston models (see~\cite{JR2} and~\cite{OMAR}) by making the market highly endogenous. In this paper we first study the nearly unstable regime for a Hawkes-type process and provide functional limit theorems regarding limit intensity and large scale fluctuations (Theorems~\ref{THM1} and~\ref{THM2}). Then we apply these processes to provide a micro-structural model for price fluctuations and volatility (Theorems~\ref{THM3} and~\ref{THM4}). We obtain a Heston type model which can be specified as follows
\begin{eqnarray*}\label{heston}
X_t^+&=& a\int_0^t(x_0^+ - X_s^+)ds + b\int_0^t\sqrt{X_s^+}dW_r^1\nonumber + b\int_0^t\sqrt{X_s^-}dW_r^2,\nonumber  \\
X_t^-&=& a\int_0^t(x_0^- - X_s^-)ds + b\int_0^t\sqrt{X_s^+}dW_r^3\nonumber + b\int_0^t\sqrt{X_s^-}dW_r^4,\nonumber  \\
P_t&=& \int_0^t \sqrt{X_r^+}d\Big(W^{1}_r-W^{3}_r\Big) + \int_0^t \sqrt{X_r^-}d\Big(W^{2}_r-W^{4}_r\Big),
\end{eqnarray*}
Where $a,b$ are positive constants, $(X_t^+)$, $(X_t^-)$ can be viewed as bid and side volatility, respectively, $(P_t)$ is the price of the underlying financial instrument and $(W^i)_t$ are independent Brownian motions which, however, do depend on the volatilities. In other words the above is a weak formulation. Our model provides an intuitive explanation for the stylized fact which says that the noise driving the price should be negatively correlated with the noise driving the volatility. It does so by saying that this negative dependence is due to higher variance of the sell-side order arrivals.

In Section~\ref{chap2} we introduce the Hawkes type processes, state two theorems describing their nearly unstable limit and prove them. Building on that, in Section~\ref{chap3} we introduce a tick-by-tick price in which bid and ask orders have random intensity and duration of execution which is very appealing intuitively.  We derive nearly unstable limits which give the Heston-type model described above. Crucially we assume that the intensity and time of execution have finite variance. We strongly suspect that if we assume heavy-tailed distribution for the random time of execution we are bound to obtain some type of rough volatility model. However, this remains to be seen.

Our work is inspired by the results in~\cite{JR1} and~\cite{JR2} and the models studied therein can be seen as mean-field approximations to the model provided by us.

\subsection{Notation}
By $\mathcal{D}_E[0,\tau]$ we denote the Skorohod space of cadlag processes with values in a separable metric space $E$ with $J_1$ topology.

\section{Nearly unstable Hawkes-type processes}\label{chap2}

Consider the following population model. The total population size is governed by a Poisson process with a random time-dependent intensity $(\lambda_t)_{t\geq 0}$  such that for a given $t>0$, $\lambda_t$ may depend on the whole history of the population growth until time $t$. This dependence can be described as follows: to every reproduction event (an arrival event of a Cox process) we attach a random variable $X\geq 0$ which tells us how much offspring was produced in this event, and a random variable $Y\geq 0$ gives the \emph{reproduction-lifetime} of each child connected to this reproduction event. During its reproduction-lifetime each child increases the intensity $\lambda_t$ by one. After this time it no longer does so. We also assume an exogenous immigration rate equal to $\lambda_0$

The above description leads to the following equation that $(\lambda_t)$ must satisfy  
\begin{equation}\label{lambda_1}
\lambda_t = \lambda_0 + \int_0^t\int_0^{\lambda_{s-}}\int_0^\infty\int_0^\infty x\mathbf{1}_{\{t-s<y\}}M(ds,du,dx,dy),
\end{equation}
where $M$ is a Poisson random measure on $\mathbb{R}_+^4$ independent of $(\lambda_t)$ with intensity given by
\begin{equation}
ds\otimes du \otimes \mathbb{P}_X(dx) \otimes \mathbb{P}_Y(dy),
\end{equation}
with $\mathbb{P}_X(dx)$ and $\mathbb{P}_Y(dy)$ being distributions of $X$ and $Y$, respectively. By $N_t$ the process such that $N_t$ is the total number of offspring at time $t\geq 0$. It is not hard to show (for example by computing $\mathbb{E}(\lambda_t)$) that, as long as $\mathbb{E}(X)\mathbb{E}(Y)<1$, we have no explosion almost surely, i.e., the number of arrivals of $(\lambda_t)$ is alsmost surely finite on any compact interval of $\mathbb{R}_+$.

\subsection{Nearly unstable regime: intensity}\label{chap2_1}

Our aim is to study the nearly unstable regime in which $\mathbb{E}(X)\mathbb{E}(Y)$ gets close to one. To do so we rewrite~\eqref{lambda_1} in the following way
\begin{equation}\label{lambda_2}
\lambda_t^T = \lambda_0^T + \int_0^t\int_0^{\lambda_{s-}^T}\int_0^\infty\int_0^\infty a_Tx\mathbf{1}_{\{t-s<y\}}M(ds,du,dx,dy),
\end{equation}
where $a^T \in (0,1)$ and $T>0$. Following~\cite{JR1}, we take $a_T\rightarrow 1$ as $T\rightarrow \infty$. To obtain a non-trivial limit we rescale time and consider 
\begin{equation}\label{l_tilde}
(\widetilde{\lambda_t^T})_{t\geq 0}:=\left(\frac{\lambda_{Tt}}{F_T}\right)
\end{equation}
for a suitable deterministic normalization $F_T$. But first let us rewrite~\eqref{lambda_2} in the following way
\begin{eqnarray}\label{lambda_3}
\lambda_t^T &=&\lambda_0^T + \int_0^t\int_0^{\lambda_{s-}^T}\int_0^\infty\int_0^\infty a_Tx\mathbf{1}_{\{t-s<y\}}\widetilde{M}(ds,du,dx,dy)\\
&& \: + \int_0^t \lambda_{s-}^T a_T \mathbb{E}(X)\mathbb{P}(Y>t-s)ds,
\end{eqnarray}
where $\widetilde{M}$ is the compensated Poisson random measure. Notice that in the second line of~\eqref{lambda_3} we can replace $\lambda_{s-}^T$ by $\lambda_{s}^T$. It will be convenient to introduce some notation. Let us put
\begin{equation}
\Phi^T(z):=a_T\mathbb{P}(Y>z), \quad z\geq 0
\end{equation}
and
\begin{equation}\label{m_def}
m:=\int_0^\infty z\mathbb{P}(Y>z)z=\frac{1}{2}\mathbb{E}(Y^2).
\end{equation}
For the rest of Section~\ref{chap2} we make the following assumption.
\begin{asum*}[\textbf{A}] 
Assume that $\mathbb{E}(X)=1=\mathbb{E}(Y)$ and, moreover, both $\mathbb{E}(X^2)$ and $\mathbb{E}(Y^2)$ are finite.
\end{asum*}
Recall the following classical lemma.
\begin{lem}\label{lem_1}
If $f(t)=h(t)+\int_0^t\phi(t-s)f(s)ds$ with $h$ measurable and locally bounded and $\phi \in L^1(0,\infty)$ with $\int_0^\infty\phi(z)dz<1$, then 
\begin{equation}
f(t) = h(t)+\int_0^t\phi(t-s)h(s)ds,
\end{equation}
where $\psi(z)=\sum_{k=1}^\infty \big(\phi\big)^{\ast k}(z)$ is the infinite sum of convolutions.
\end{lem}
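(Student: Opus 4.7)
The plan is to obtain the resolvent formula by iterating the Volterra equation and identifying the Neumann-type series $\psi$ (the conclusion should of course read $\psi$, not $\phi$, on the right-hand side, consistent with the definition given afterwards). First, substituting $f(s)=h(s)+\int_0^s\phi(s-u)f(u)du$ into the right-hand side of the original equation and using Fubini (associativity of convolution on $[0,t]$) gives
\begin{equation*}
f(t) = h(t) + (\phi*h)(t) + (\phi^{*2}*f)(t).
\end{equation*}
An induction on $n\ge 0$ then yields
\begin{equation*}
f(t) = h(t) + \sum_{k=1}^{n}(\phi^{*k}*h)(t) + (\phi^{*(n+1)}*f)(t),
\end{equation*}
reducing the problem to passing to the limit $n\to\infty$ on an arbitrary bounded interval $[0,T]$.

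Next I would control the series and the remainder separately on $[0,T]$. Setting $q:=\int_0^\infty\phi(z)dz<1$, Young's convolution inequality (or a direct induction on $k$) gives $\|\phi^{*k}\|_{L^1(0,T)}\le q^k$, so the partial sums $\sum_{k=1}^n\phi^{*k}$ converge absolutely in $L^1(0,T)$ to a limit which is, by construction, precisely $\psi$. Since $h$ is locally bounded, $\|\phi^{*k}*h\|_{\infty,[0,T]}\le\|h\|_{\infty,[0,T]}\,q^k$, so the series $\sum_{k=1}^\infty\phi^{*k}*h$ converges uniformly on $[0,T]$ and equals $\psi*h$ by dominated convergence.

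For the remainder to vanish, $f$ itself must be locally bounded on $[0,T]$. I would establish this via a generalised Gronwall argument: iterating the integral inequality $|f(t)|\le\|h\|_{\infty,[0,T]}+\int_0^t\phi(t-s)|f(s)|ds$ produces a uniform bound dominated by $\|h\|_{\infty,[0,T]}\sum_{k\ge 0}q^k<\infty$. Once this is in hand, $|(\phi^{*(n+1)}*f)(t)|\le\|f\|_{\infty,[0,T]}\,q^{n+1}\to 0$ uniformly on $[0,T]$, and letting $n\to\infty$ in the iterated identity gives
\begin{equation*}
f(t) = h(t)+\int_0^t\psi(t-s)h(s)ds,
\end{equation*}
as required.

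The only mildly subtle point is the local boundedness of $f$, because $\phi\in L^1$ need not be bounded, so a naive pointwise Gronwall fails; the iteration of the integral inequality above is the standard workaround and works precisely because $q<1$. Apart from this, the argument is essentially a geometric-series computation and presents no real obstacle.
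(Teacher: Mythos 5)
The paper offers no proof of this lemma: it is stated with ``Recall the following classical lemma'' and used as a black box (it is the standard resolvent identity for linear Volterra equations, cf.\ Lemma 3 in Jaisson--Rosenbaum), so there is nothing in the paper to compare your argument against. Your Neumann-series iteration is the standard proof and is essentially correct, and you are right that the displayed conclusion should read $\psi(t-s)$ rather than $\phi(t-s)$. Two small points. First, the bound $\|\phi^{*k}\|_{L^1(0,T)}\le q^k$ with $q=\int_0^\infty\phi$ uses $\phi\ge 0$ (or one should read the hypothesis as $\int_0^\infty|\phi|<1$); in the paper's application $\phi(z)=a_T\mathbb{P}(Y>z)\ge 0$, so this is harmless but worth stating. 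Second, your treatment of the remainder has a mild circularity: to run the ``generalised Gronwall'' iteration and conclude that the remainder $\phi^{*(n+1)}*|f|$ vanishes pointwise you already need a bound on $f$, which is what you are trying to establish. The clean fix is to note that the equation only makes sense for $f\in L^1_{\mathrm{loc}}$, that $\|\phi^{*(n+1)}*f\|_{L^1(0,T)}\le q^{n+1}\|f\|_{L^1(0,T)}\to 0$, so the identity $f=h+\psi*h$ holds almost everywhere, and the right-hand side is locally bounded because $h$ is; alternatively, simply take local boundedness of $f$ as part of the hypothesis (it is how the lemma is invoked in the paper, where $f$ is an expected intensity known a priori to be locally bounded). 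With either repair the argument is complete.
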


Using Lemma~\ref{lem_1} with 
\begin{eqnarray}
h(s)&=&\lambda_0^T + \int_0^t\int_0^{\lambda_{s-}^T}\int_0^\infty\int_0^\infty a_Tx\mathbf{1}_{t-s<y}\widetilde{M}(ds,du,dx,dy),\\
\phi(z)&=& a_T \mathbb{P}(Y>z)=:\Phi^T(z)
\end{eqnarray}
we get
\begin{eqnarray}\label{lambda_4}
\lambda_t^T &=&\lambda_0^T + \lambda_0^T\Psi^T(t-s)ds \\
&& \: + \int_0^t\int_0^{\lambda_{s-}^T}\int_0^\infty\int_0^\infty a_Tx\mathbf{1}_{\{t-s<y\}}\widetilde{M}(ds,du,dx,dy)\\
&& \: + \int_0^t\Psi^T(t-s)\Big(\int_0^s\int_0^{\lambda_{r-}^T}\int_0^\infty\int_0^\infty a_Tx\mathbf{1}_{s-r<y}\\
&& \:\widetilde{M}(ds,du,dx,dy)\Big)ds,
\end{eqnarray}
where $\Psi^T(z)=\sum_{k=1}^\infty \big(\Phi^T\big)^{\ast k}(z)$. Using Proposition 2.1 in~\cite{JR1} and the fact that the functions $\Psi^T$ are non-increasing one can easily show that the following lemma holds.

\begin{lem}\label{lem_T0}
Let 
\begin{equation}
\rho^T(z):=\frac{T(1-a_T)}{a_T}\Psi^T(Tz), \quad z\geq 0.
\end{equation}
Then $\rho^T$ is non-increasing and $\int_0^\infty \rho^T(z)dz = 1$ for all $T>0$. Let $Z^T$ be a non-negative random variable with density $\rho^T$. We have the following:
\begin{itemize}
\item[(i)]
$Z^T$ converges weakly to an exponential random variable with density $\frac{\lambda}{m}\exp(-\frac{\lambda x}{m})dx$.
\item[(ii)]
$\rho^T$ converges to $\frac{\lambda}{m}\exp(-\frac{\lambda z}{m})$ uniformly on $[0,\infty)$.
\end{itemize}
\end{lem}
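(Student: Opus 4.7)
The plan is to split the proof into three steps: the elementary structural properties of $\rho^T$, weak convergence via Laplace transforms, and an upgrade to uniform convergence that exploits monotonicity.

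For the structural claims, Fubini gives $\int_0^\infty \Phi^T(z)\,dz = a_T\mathbb{E}(Y) = a_T$, so an induction on $k$ yields $\|(\Phi^T)^{*k}\|_{L^1} = a_T^k$. Summing the geometric series produces $\|\Psi^T\|_{L^1} = a_T/(1-a_T)$, and rescaling gives
\[
\int_0^\infty \rho^T(z)\,dz \;=\; \frac{T(1-a_T)}{a_T}\cdot\frac{1}{T}\cdot\frac{a_T}{1-a_T} \;=\; 1.
\]
Monotonicity of $\rho^T$ is inherited from the monotonicity of $\Psi^T$, which is exactly the content of Proposition~2.1 of~\cite{JR1}; the time-rescaling by $T$ preserves this.

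For part~(i), I would pass to Laplace transforms. The renewal identity $\Psi^T = \Phi^T + \Phi^T * \Psi^T$ (implicit in Lemma~\ref{lem_1}) gives $\widehat{\Psi}^T(\xi) = \widehat{\Phi}^T(\xi)/(1-\widehat{\Phi}^T(\xi))$, so
\[
\widehat{\rho}^T(\xi) \;=\; \frac{1-a_T}{a_T}\cdot\frac{\widehat{\Phi}^T(\xi/T)}{1-\widehat{\Phi}^T(\xi/T)}.
\]
Combining the standard identity $\int_0^\infty e^{-\xi z}\mathbb{P}(Y>z)\,dz = \xi^{-1}(1-\mathbb{E}e^{-\xi Y})$ with the second-order expansion $\mathbb{E}e^{-\xi Y/T} = 1 - \xi/T + m\xi^2/T^2 + o(T^{-2})$, whose validity comes from the finite-second-moment part of Assumption~\textbf{A}, I get $\widehat{\Phi}^T(\xi/T) = a_T(1 - m\xi/T) + o(1/T)$. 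Under the nearly unstable scaling $T(1-a_T)\to\lambda$, a short algebraic simplification shows
\[
\widehat{\rho}^T(\xi) \;\longrightarrow\; \frac{\lambda}{\lambda + m\xi} \;=\; \frac{\lambda/m}{\lambda/m + \xi},
\]
which is the Laplace transform of the $\mathrm{Exp}(\lambda/m)$ law; the continuity theorem for Laplace transforms then yields (i).

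For part~(ii) I would use monotonicity twice. Since $\rho^T$ is non-increasing with unit mass, one has the elementary tail bound $\rho^T(z)\le 2/z$ for all $z>0$; together with the exponential decay of the limit, this makes both sides uniformly small on $[K,\infty)$ for $K$ large. On the compact interval $[0,K]$, Helly's selection theorem combined with the uniqueness of the limit from (i) upgrades the distributional convergence to pointwise convergence of the densities at every point (continuity of the limit rules out any subsequential ambiguity), and the classical fact that a monotone sequence converging pointwise to a continuous monotone limit converges uniformly on compacts closes the gap. Patching the two regimes delivers uniform convergence on $[0,\infty)$. The main technical hurdle is this last step: producing pointwise convergence of the densities from the Laplace-level convergence of (i) while simultaneously controlling the tail uniformly in $T$. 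Both ingredients rest squarely on the monotonicity of $\rho^T$, which is precisely why the statement highlights that property.
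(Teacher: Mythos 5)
Your normalization computation and your Laplace-transform proof of part (i) are correct, and they follow essentially the route the paper itself takes: the paper offers no proof of this lemma beyond citing Proposition 2.1 of~\cite{JR1}, and that proposition is proved exactly by the renewal identity $\widehat{\Psi}^T=\widehat{\Phi}^T/(1-\widehat{\Phi}^T)$ together with the second-order expansion of $\mathbb{E}\,e^{-\xi Y/T}$ under the finite-second-moment hypothesis, so your reconstruction is the intended argument rather than a genuinely different one.

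The genuine gap is the monotonicity of $\rho^T$, on which your entire proof of part (ii) rests (both the tail bound $\rho^T(z)\le 2/z$ and the Helly--Dini upgrade on compacts). You attribute it to Proposition 2.1 of~\cite{JR1}, but that proposition is the weak-convergence statement; the paper explicitly lists the monotonicity of $\Psi^T$ as a \emph{separate} ingredient, and it is not a consequence of the monotonicity of $\Phi^T$: convolution powers of a non-increasing function need not be non-increasing. Concretely, Assumption (\textbf{A}) allows $Y\equiv 1$, so that $\Phi^T=a_T\mathbf{1}_{[0,1)}$; then $(\Phi^T)^{\ast 2}(z)=a_T^2z$ on $[0,1]$ and in fact $\Psi^T(z)=a_T e^{a_T z}$ there, which is strictly increasing. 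So the monotonicity claim needs either an extra hypothesis on $Y$ (e.g.\ a completely monotone tail) or a different argument, and without it your proof of (ii) does not close. A related symptom, which is a defect of the statement itself rather than of your argument: $\rho^T(0)=T(1-a_T)\to\lambda$ while the claimed uniform limit equals $\lambda/m$ at the origin, so uniform convergence on all of $[0,\infty)$ already fails at $z=0$ whenever $m\neq 1$ (again $Y\equiv1$, where $m=1/2$, is an example). Granting monotonicity and restricting to $(0,\infty)$, your Helly-selection plus P\'olya/Dini argument for (ii) is sound, but as written the key hypothesis it relies on is unproved and, under the stated assumptions, false in general.
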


Changing the order of integration we have
\begin{eqnarray}\label{lambda_5}
\lambda_t^T &=&\lambda_0^T + \lambda_0^T\Psi^T(t-s)ds \\
&& \: + \int_0^t\int_0^{\lambda_{r-}^T}\int_0^\infty\int_0^\infty a_Tx\Bigg(\mathbf{1}_{t-r<y}\\
&& \: + \int_0^{t-r}\Psi^T(t-r-s)\mathbf{1}_{\{s<y\}}ds\Bigg)\widetilde{M}(ds,du,dx,dy).
\end{eqnarray}
At the same time, the total number of offspring born until $t$ is equal to
\begin{eqnarray}\label{N_1}
N_t^T&=& \int_0^t\int_0^{\lambda_{r-}^T}\int_0^\infty\int_0^\infty x\widetilde{M}(ds,du,dx,dy) \\
&& \:+ \int_0^t\lambda_{r}^Tdr.
\end{eqnarray}.

It turns out that the suitable scaling in this case is given by $F_T=T$ and we may rewrite~\eqref{lambda_5} as
\begin{eqnarray}\label{lambda_6}
\frac{\lambda_{Tt}^T}{T} &=&\frac{\lambda_0^T}{T} + \lambda_0^T\int_0^t \Psi^T(T(t-s))ds \\
&& \: + \int_0^t\int_0^{\lambda_{Tr-}^T/T}\int_0^\infty\int_0^\infty a_Tx\Bigg(\frac{1}{T}\mathbf{1}_{\{T(t-r)<y\}}\nonumber\\
&& \: + \int_0^{t-r}\Psi^T(T(t-r-s))\mathbf{1}_{\{Ts<y\}}ds\Bigg)\nonumber\\
&& \:\widetilde{M}(Tdr,Tdu,dx,dy),\nonumber
\end{eqnarray}
where $\widetilde{M}(Tds,Tdu,dx,dy)$ is a compensated Poisson random measure with intensity
\begin{equation}
Tds\otimes Tdu \otimes \mathbb{P}_X(dx) \otimes \mathbb{P}_Y(dy).
\end{equation}

We have the following theorem which says that the limit intensity process follows a CIR-type process just as in Theorem 2.2 in~\cite{JR1}. This suggests that the result obtained by Jaisson and Rosenbaum is much more universal and can be seen in a way which resembles high-frequency markets a little more intuitively.  

\begin{theorem}\label{THM1}
Fix any $\tau>0$. Take $a_T=1-\lambda/T$ for some fixed $\lambda>0$ and fix $\lambda_0^T=\lambda_0>0$. Then the process 
\begin{equation}
\widetilde{\lambda}^T_t:=\frac{\lambda_{Tt}^T}{T}, \quad t\geq 0,
\end{equation}
converges weakly in $\mathcal{D}[0,\tau]$ and the limit process $(\lambda_t)$ satisfies
\begin{equation}\label{L_lim}
\widetilde{\lambda}_t = \int_0^t \theta(t-r)dr + \sigma_1 \sigma_2 \int_0^t \theta(t-r)\sqrt{\widetilde{\lambda}_r}dW_r, \quad t\in[0,\tau],
\end{equation}
where $\sigma_1^2=\mathbb{E}(X^2)$, $\sigma_2^2=\mathbb{E}(Y^2)$, $\theta(z)=1/m\exp(-\lambda z/m)$ and $(W_t)$ is a standard Brownian motion.
\end{theorem}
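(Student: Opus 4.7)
The plan is to follow the semi-martingale strategy of Jaisson and Rosenbaum~\cite{JR1}, adapted to equation~\eqref{lambda_6}. First, I would decompose $\widetilde{\lambda}^T_t=D^T_t+M^T_t$, where
$$D^T_t:=\frac{\lambda_0^T}{T}+\lambda_0^T\int_0^t\Psi^T(T(t-s))\,ds$$
is the drift produced by compensating $M$ and $M^T_t$ is the remaining (Volterra) stochastic integral against $\widetilde{M}(T\,dr,T\,du,dx,dy)$ in~\eqref{lambda_6}. The choice $a_T=1-\lambda/T$ gives $T(1-a_T)/a_T\to\lambda$, so Lemma~\ref{lem_T0}(ii) yields $\Psi^T(T\cdot)\to\theta(\cdot)$ uniformly on $[0,\infty)$; dominated convergence then turns $D^T_t$ into $\lambda_0\int_0^t\theta(t-s)\,ds$, reproducing (up to the $\lambda_0$ factor) the deterministic contribution of~\eqref{L_lim}.

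Next I would compute the predictable quadratic variation of $M^T_t$. The indicator $\mathbf{1}_{\{Ts<y\}}$ confines the inner $s$ to $[0,y/T]$, a vanishing interval on which $\Psi^T(T(t-r-s))\approx\Psi^T(T(t-r))\approx\theta(t-r)$; the boundary term $T^{-1}\mathbf{1}_{\{T(t-r)<y\}}$ is $L^2$-negligible since $\mathbb{P}_Y$ has finite second moment. Consequently the bracketed factor in~\eqref{lambda_6}, multiplied by $a_Tx$, is well-approximated by $a_T\,x\,\theta(t-r)\,(y/T)$. Squaring and integrating against the intensity $T^2\,dr\,du\,\mathbb{P}_X(dx)\,\mathbb{P}_Y(dy)$ of $\widetilde{M}(T\,dr,T\,du,dx,dy)$, and using $\mathbb{E}(X^2)=\sigma_1^2$, $\mathbb{E}(Y^2)=\sigma_2^2$, I obtain
$$\langle M^T\rangle_t\;\longrightarrow\;\sigma_1^2\,\sigma_2^2\int_0^t\theta(t-r)^2\,\widetilde{\lambda}_r\,dr,$$
which is exactly the bracket of the putative limiting stochastic integral.

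Tightness of $(\widetilde{\lambda}^T)_{T>0}$ in $\mathcal{D}[0,\tau]$ would follow from Aldous' criterion once I bound $\mathbb{E}\sup_{t\leq\tau}\widetilde{\lambda}^T_t$, which I would do by taking expectations in~\eqref{lambda_6}, applying Lemma~\ref{lem_1} to the resulting Volterra inequality, and using the uniform bound on $\Psi^T(T\cdot)$. Given tightness and convergence of the characteristics above, any subsequential limit $\widetilde{\lambda}$ is continuous and satisfies~\eqref{L_lim}; since~\eqref{L_lim} can be reduced by an It\^o transformation (exploiting that $\theta$ is exponential) to a genuine Markovian CIR SDE for which pathwise uniqueness holds by Yamada--Watanabe, the full sequence converges weakly.

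The main obstacle is the Volterra structure: the kernel $\theta(t-r)$ in~\eqref{L_lim} depends on the free variable $t$, so $M^T_t$ is not a martingale in $t$ and standard continuous-mapping or Jakubowski--M\'emin--Pag\`es stability results for stochastic integrals do not apply directly. To address this I would adopt the strategy of~\cite{JR1}: for each fixed $t$ treat $s\mapsto\int_0^s(\cdots)\widetilde{M}(T\,dr,T\,du,dx,dy)$ as a genuine semimartingale in $s$, exploit the uniform convergence of $\Psi^T(T\cdot)$ from Lemma~\ref{lem_T0}(ii) to replace $\Psi^T(T\cdot)$ by $\theta(\cdot)$ inside the integrand, and then combine convergence of finite-dimensional distributions with tightness to promote this to functional convergence in $\mathcal{D}[0,\tau]$.
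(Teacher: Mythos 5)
Your outline reproduces the paper's skeleton (drift plus remainder plus main stochastic term, uniform convergence of $\Psi^T(T\cdot)$ to $\theta$ via Lemma~\ref{lem_T0}, $L^2$-negligibility of the boundary and kernel-approximation errors, a martingale CLT, then stability of stochastic integrals plus weak uniqueness of the limiting SDE), but the treatment of the martingale part has a genuine gap: it is circular as stated. You propose to show $\langle M^T\rangle_t\to\sigma_1^2\sigma_2^2\int_0^t\theta(t-r)^2\widetilde{\lambda}_r\,dr$, yet the right-hand side involves the limit process $\widetilde{\lambda}$ whose very existence is what is being proved, and convergence of a bracket to a \emph{random} limit does not by itself identify the joint law of $(\widetilde{\lambda}^T,M^T)$ or show that a subsequential limit solves~\eqref{L_lim}. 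The paper's central device, which your proposal omits, is to renormalize the driving noise: the main term is written as $\int_0^t\theta(t-r)\sqrt{\smash[b]{\widetilde{\lambda}^T_r}}\,dW^T_r$ where $W^T$ (see~\eqref{W_def}) carries a factor $(\widetilde{\lambda}^T_{s-})^{-1/2}$ inside the integrand, so that $[W^T]_r$ converges in probability to the \emph{deterministic} limit $\sigma_1^2\sigma_2^2 r$. This gives $W^T\Rightarrow\sigma_1\sigma_2 W$ unconditionally, and only then does the Kurtz--Protter stability theorem (Theorem 5.4 in~\cite{PROTTER}) transfer weak convergence to the stochastic integral jointly with $\widetilde{\lambda}^T$, after which weak uniqueness of~\eqref{L_lim_plim} closes the loop. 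Your appeal to ``the strategy of~\cite{JR1}'' gestures at this but does not supply the normalization, and without it the identification step does not go through; alternatively you would need to set up an explicit martingale-problem argument, which you do not do.

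A secondary omission: since Assumption (\textbf{A}) gives only finite \emph{second} moments of $X$ and $Y$, a Chebyshev-type proof that the optional bracket of the driving martingale converges in probability requires controlling fourth moments. The paper handles this by truncating at a level $K(T)$ (the terms $A^{T,2},A^{T,3},A^{T,4}$, which are killed by Doob's inequality, and the bound $\mathbf{Var}([W^T]_r)\lesssim T^{-1}\mathbb{E}(X^4\mathbf{1}_{\{X\leq K(T)\}})\mathbb{E}(Y^4\mathbf{1}_{\{Y\leq K(T)\}})$ in Lemma~\ref{BM_lem}). Working with the predictable bracket instead, as you suggest, you would have to verify a Lindeberg-type condition on the jumps of $M^T$; your proposal addresses neither route.
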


\begin{proof}
We may rewrite~\eqref{lambda_6} again as follows:
\begin{eqnarray}\label{lambda_7}
\widetilde{\lambda}^T_t &=&R^T_t+\lambda_0\int_0^t\frac{1}{m}\exp\big(-\frac{\lambda}{m}(t-s)\big)ds \\
&& \: + \frac{1}{T}\int_0^t\int_0^{\widetilde{\lambda}^T_{r-}}\int_0^\infty\int_0^\infty xy\frac{1}{m}\exp\big(-\frac{\lambda}{m}(t-r)\big)\nonumber\\
&& \:\widetilde{M}(Tdr,Tdu,dx,dy),\nonumber
\end{eqnarray}
where $R^T_t=\lambda_0/T+R^{T,1}_t+R^{T,2}_t+R^{T,3}_t$ with
\begin{eqnarray*}
R^{T,1}_t &:=& \lambda_0^T\int_0^t \Psi^T(T(t-s))ds - \lambda_0\int_0^t\frac{1}{m}\exp\big(-\frac{\lambda}{m}(t-s)\big)ds,\\
R^{T,2}_t &:=& \int_0^t\int_0^{\widetilde{\lambda}^T_{r-}}\int_0^\infty\int_0^\infty \frac{1}{T}a_Tx\mathbf{1}_{\{T(t-r)<y\}}\widetilde{M}(Tds,Tdu,dx,dy),\\
R^{T,3}_t &:=& \int_0^t\int_0^{\widetilde{\lambda}^T_{r-}}\int_0^\infty\int_0^\infty a_Tx\Bigg(\int_0^{t-r}\Psi^T(T(t-r-s))\mathbf{1}_{\{Ts<y\}}ds\\
&& \: - \frac{y}{T}\frac{1}{m}\exp\big(-\frac{\lambda}{m}(t-r)\big)\Bigg)\widetilde{M}(Tds,Tdu,dx,dy).
\end{eqnarray*}
We need the following lemma which we prove in the appendix.

\begin{lem}\label{r_lem}
For any $\tau>0$, $(R^T_T)_{0\leq t \leq \tau}$ converges to $0$ in $\mathcal{D}([0,\tau])$ as $T\rightarrow \infty$.
\end{lem}

Let $K(T)$ be positive that $K(T)\rightarrow \infty$ as $T\rightarrow \infty$. The exact choice will be specified later. Notice that equation~\eqref{lambda_7} can be rewritten as
\begin{equation}
\widetilde{\lambda}^T_t =R^T_t + U_t^T + A_t^{T,1} + A_t^{T,2},
\end{equation}
where
\begin{eqnarray}
U_t^T &=& \lambda_0\int_0^t\frac{1}{m}\exp\big(-\frac{\lambda}{m}(t-s)\big)ds, \\
A_t^{T,1} &=& \frac{1}{T}\int_0^t\int_0^{\widetilde{\lambda}^T_{r-}}\int_{[0,K(T)]} \int_{[0,K(T)]} xy\frac{1}{m}\exp\big(-\frac{\lambda}{m}(t-r)\big)\\ && \:\widetilde{M}(Tdr,Tdu,dx,dy),\label{loc_12} \nonumber\\
A_t^{T,2} &=& \frac{1}{T}\int_0^t\int_0^{\widetilde{\lambda}^T_{r-}}\int_{(K(T), \infty]} \int_{(K(T), \infty]} xy\frac{1}{m}\exp\big(-\frac{\lambda}{m}(t-r)\big)\\
A_t^{T,3} &=& \frac{1}{T}\int_0^t\int_0^{\widetilde{\lambda}^T_{r-}}\int_{[0,K(T)]} \int_{(K(T), \infty]} xy\frac{1}{m}\exp\big(-\frac{\lambda}{m}(t-r)\big)\\
A_t^{T,4} &=& \frac{1}{T}\int_0^t\int_0^{\widetilde{\lambda}^T_{r-}}\int_{(K(T), \infty]} \int_{[0,K(T)]} xy\frac{1}{m}\exp\big(-\frac{\lambda}{m}(t-r)\big)\\
&& \: \widetilde{M}(Tdr,Tdu,dx,dy). \nonumber
\end{eqnarray}
It is easy to show that by Doob's $L^2$ inequality the sequence of processes $(A_t^{T,2})$ converges to zero in $\mathcal{D}[0,\tau]$. Indeed,
\begin{equation}
\mathbb{P}(\sup_{t\in [0, \tau]}|A_t^{T,2}|>\epsilon)\leq \frac{4}{\epsilon^2}\mathbb{E}(A_\tau^{T,2})^2,
\end{equation}
with 
\begin{equation}\label{loc_11}
\mathbb{E}(A_t^{T,2})^2 = \int_0^\tau \mathbb{E}(\widetilde{\lambda}_r^T)\mathbb{E}\big(X^2\mathbf{1}_{\{X>K(T)\}}\big)\mathbb{E}\big(Y^2\mathbf{1}_{\{Y>K(T)\}}\big) dr.
\end{equation}
Similarly $(A_t^{T,3})$ and $(A_t^{T,4})$ converge to $0$ in $\mathcal{D}[0,\tau]$. Since $\mathbb{E}(\widetilde{\lambda}_r^T)$ is bounded for all $r\geq 0$ and $T>0$,  we see that the right-hand side of~\eqref{loc_11} converges to $0$ as $T\rightarrow \infty$. Now comes the crucial part. Note that the right-hand side of~\eqref{loc_12} can be rewritten as (we put $\theta(z):= 1/m \exp(-(\lambda z/m))$ for greater clarity)
\begin{equation}
\int_0^t \sqrt{\widetilde{\lambda}_r^T}\theta(t-r)dW^T_r,
\end{equation}
where for $r \geq 0$
\begin{eqnarray}\label{W_def}
W^T_r&:=& \frac{1}{T}\int_0^r\int_0^{\widetilde{\lambda}^T_{s-}}\int_{[0,K(T)]} \int_{[0,K(T)]} \\
&& \: \frac{1}{\sqrt{\widetilde{\lambda}_s^T}}xy\widetilde{M}(Tds,Tdu,dx,dy).
\end{eqnarray}
Now we would like to show that the sequence of processes $(W^T_t)$ converges to a Brownian motion as $T\rightarrow \infty$.

\begin{lem}\label{BM_lem}
As $T\rightarrow \infty$ $(W^T_t)$ converges weakly in $\mathcal{D}[0,\tau]$ to $(\sigma_1 \sigma_2 W_t)$, where $\sigma_1^2=\mathbb{E}(X^2)$, $\sigma_2^2=\mathbb{E}(Y^2)$ and $(W_t)$ is the standard Brownian motion.
\end{lem}

The proof is given in the appendix.

Now we are well equipped to prove the desired convergence. By Theorem 5.4 in~\cite{PROTTER}
\begin{equation}
\Big(R^T_t + U_t^T + A_t^{T,1}, \widetilde{\lambda}^T_t, W_t^t\Big)_{t\geq 0}
\end{equation}
is weakly relatively compact in $\mathcal{D}_{\mathbb{R}^3}[0,\tau]$ and since the limit SDE
\begin{equation}\label{L_lim_plim}
\widetilde{\lambda}_t = \int_0^t \theta(t-r)dr + \sigma_1 \sigma_2 \int_0^t \theta(t-r)\sqrt{\widetilde{\lambda}_r}dW_r
\end{equation}
has a global solution and weak local uniqueness holds, we coclude that $(\widetilde{\lambda}_t^T)$ converges weakly in $\mathcal{D}[0,\tau]$ to $(\widetilde{\lambda}_t)$ for any $\tau>0$.

\end{proof}

\newpage

\subsection{Nearly unstable regime: total offspring distribution}\label{chap2_2}

Recall that by  $N^T_t$ we denote the total offspring at time $t>0$, i.e., 
\begin{equation}
N_t^T = \int_0^t\int_0^{\lambda^T_{r-}}\int_0^\infty\int_0^\infty x  M(dr,du,dx,dy),
\end{equation}
which equals
\begin{eqnarray}
N_t^T &=& \int_0^t\int_0^{\lambda^T_{r-}}\int_0^\infty\int_0^\infty x  \widetilde{M}(dr,du,dx,dy) \\
&& \: + \int_0^t \lambda^T_{r-} dr
\end{eqnarray}
After rescaling we have
\begin{equation}\label{Z_def}
Z^T_t:=\frac{1}{T}\left(\widetilde{N}_{Tt}^T - \int_0^{Tt} \widetilde{\lambda}^T_{r-}dr\right) = \int_0^t\sqrt{\widetilde{\lambda}^T_{r-}}\big(dB^T_r+dE^T_r\big),
\end{equation}
where 
\begin{equation}\label{B_def}
B^T_r:= \int_0^r\int_0^{\widetilde{\lambda}^T_{s-}}\int_0^{K(T)}\int_0^\infty x \frac{1}{\sqrt{\widetilde{\lambda}^T_{s-}}} \widetilde{M}(Tds,Tdu,dx,dy),
\end{equation}
\begin{equation}\label{E_def}
E^T_r:= \int_0^r\int_0^{\widetilde{\lambda}^T_{s-}}\int_{K(T)}^\infty\int_0^\infty x \frac{1}{\sqrt{\widetilde{\lambda}^T_{s-}}} \widetilde{M}(Tds,Tdu,dx,dy),
\end{equation}
and $K(T)$ is as before.

\begin{theorem}\label{THM2}
As $T\rightarrow \infty$ the process $Z^T$ defined by~\eqref{Z_def} converges weakly in $\mathcal{D}[0,\tau]$ to 
\begin{equation}\label{Z_lim}
(\sigma_1 \int_0^t \sqrt{\widetilde{\lambda}_s}dB_s)_{t\geq 0},
\end{equation}
where $B$ is a standard Brownian motion, $(\sqrt{\widetilde{\lambda}_s})_{s\geq 0}$ satisfies~\eqref{L_lim_plim} and $\sigma_1=\sqrt{\mathbb{E}(X^2)}$.
\end{theorem}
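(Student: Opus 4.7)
The structure parallels that of Theorem~\ref{THM1}: $Z^T$ is itself a martingale, I would split off a negligible tail coming from large jumps, apply a functional martingale central limit theorem to what remains, and then use the continuity of the stochastic integral to pass to the limit jointly with $\widetilde\lambda^T$ of Section~\ref{chap2_1}. A direct computation on the Poisson intensity gives
\[
\langle Z^T\rangle_t=\sigma_1^2\int_0^t\widetilde\lambda^T_{r-}\,dr,
\]
which already coincides with the quadratic variation of the target $\sigma_1\int_0^\cdot\sqrt{\widetilde\lambda_r}\,dB_r$, so at the moment level nothing has to be tuned.

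The tail $\int_0^\cdot\sqrt{\widetilde\lambda^T_{r-}}\,dE^T_r$ would be handled exactly as the terms $A^{T,2},A^{T,3},A^{T,4}$ in the proof of Theorem~\ref{THM1}: by Doob's $L^2$-inequality and the It\^o isometry, its supremum norm on $[0,\tau]$ is bounded in $L^2$ by a constant multiple of $\tau\,\mathbb E(X^2\mathbf 1_{\{X>K(T)\}})\cdot\sup_{T,\,r\le\tau}\mathbb E(\widetilde\lambda^T_r)$, which vanishes as $T\to\infty$ because $\mathbb E(X^2)<\infty$ under Assumption~(A) and the mean intensities are uniformly bounded (from Section~\ref{chap2_1}). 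For the truncated martingale $B^T$ I would check, in the spirit of Lemma~\ref{BM_lem}, that its predictable quadratic variation converges in probability, uniformly on $[0,\tau]$, to $\sigma_1^2 r$ and that its maximum jump tends to zero (a choice $K(T)/T\to 0$ suffices), whence the functional martingale CLT yields $B^T\Rightarrow\sigma_1 B$ with $B$ a standard Brownian motion. Applying Theorem~5.4 of~\cite{PROTTER} to the tight triple $\big(\widetilde\lambda^T,B^T,\int_0^\cdot\sqrt{\widetilde\lambda^T_{r-}}\,dB^T_r\big)$ lifts this to joint convergence and identifies the limit of the stochastic integral as $\sigma_1\int_0^\cdot\sqrt{\widetilde\lambda_r}\,dB_r$, where $\widetilde\lambda$ solves~\eqref{L_lim_plim}; combined with the tail estimate this is the claim.

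The delicate step, and hence the main obstacle, is the identification of the joint limit. The martingale $B^T$ and the martingale $W^T$ of~\eqref{W_def} that drives $\widetilde\lambda^T$ come from the same Poisson measure, and a direct predictable-variation computation gives $\langle W^T,B^T\rangle_r\to\sigma_1^2\mathbb E(Y)\,r=\sigma_1^2 r$, so the limiting Brownian motions $B$ and $W$ are correlated. One therefore has to work with the enlarged family $(\widetilde\lambda^T,W^T,B^T)$, establish joint tightness of $(W^T,B^T)$, and invoke weak uniqueness of the resulting coupled SDE system in order to conclude that in the limit $\widetilde\lambda$ is exactly the solution of~\eqref{L_lim_plim} driven by $W$ while $Z=\sigma_1\int_0^\cdot\sqrt{\widetilde\lambda_r}\,dB_r$ with this correlated $B$.
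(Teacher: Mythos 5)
Your proposal follows essentially the same route as the paper: split off the large-jump tail $E^T$, obtain $B^T\Rightarrow\sigma_1 B$ via the martingale CLT, track the correlation between $B^T$ and the driving martingale $W^T$ (your covariation $\sigma_1^2\mathbb{E}(Y)r$ matches the paper's correlation coefficient $\mathbb{E}(Y)/\sqrt{\mathbb{E}(Y^2)}$), and close with Theorem 5.4 of Kurtz--Protter. In fact you supply more detail on the joint-limit identification than the paper's own (very terse) argument does.
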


\begin{proof}
In the same manner as in Section~\ref{chap2_1} one can show that, as $T\rightarrow \infty$, $(E^T_r)$ and $(B^T_r)$ converge in $\mathcal{D}[0,\tau]$ to $0$ and $(\sigma_1 B_r)$ respectively, where $B$ is a standard Brownian motion. In fact the pair $(W_t^T, B_t^T)$ converges weakly in $\mathcal{D}[0,\tau]$ to $\sigma_1 \sigma_2 W_t, \sigma_1 B_t)$ (see for Theorem 2.1 in~\cite{WHITT}), where $W$ and $B$ is a pair of correlated Brownian motions with correlation coefficient $\rho=\mathbb{E}(Y)/\sqrt{\mathbb{E}(Y^2)}$. In order to prove the desired convergence one only has to use Theorem 5.4 in~\cite{PROTTER} again.

\begin{comment}
Similarly as in Lemma~\ref{BM_lem} one can show that (after using the same trick with $K(T)$) $(B^T_r)$ converges to a rescald Brownian motion $(B_t)$ as $T\rightarrow \infty$. Moreover using the same techniques it follows that the square bracket (again with $K(T)$ to ensure convergence) 
\[\big[B^T,W^T\big]_r\]
converges in probability to $r \sigma_1^2 \mathbb{E}(Y)=r\sigma_1^2$. Since this limit is deterministic we have that 
\[ \big(\widetilde{\lambda}^T_t, W^T_t\big)\Rightarrow \big(\widetilde{\lambda_t}, W_t\big)\]
weakly in $\mathcal{D}_{\mathbb{R}\times\mathbb{R}}[0,\tau]$ implies that
\[ \big(\widetilde{\lambda}^T_t, B^T_t\big)\Rightarrow \big(\widetilde{\lambda_t}, B_t\big)\]
weakly in $\mathcal{D}_{\mathbb{R}\times\mathbb{R}}[0,\tau]$. Thus, by Theorem 4.6 in~\cite{PROTTER}
\begin{equation}
\widetilde{N}_t^T - \int_0^t \widetilde{\lambda}^T_{r-}dr \Rightarrow \int_0^t\sqrt{\widetilde{\lambda}_{r}}dB_r
\end{equation}
weakly in $\mathcal{D}[0,\tau]$.
\end{comment}

\end{proof}

\begin{comment}
What we would like to is take the limit of the equation~\eqref{lambda_6} as $T\rightarrow \infty$. Were we allowed to do so, we would arrive at the following limi SDE
\begin{eqnarray}\label{lambda_ll}
\widetilde{\lambda}_t &=& \lambda_0 \int_0^t \frac{1}{m}\exp\big(\lambda(t-s)/m\big)ds \\
&& \: + \int_0^t \int_0^{\widetilde{\lambda}_r} b \frac{1}{m}\exp\big(\lambda(t-r)/m\big)W(dr,ds),
\end{eqnarray}
where $b^2=\mathbb{E}(X^2)$. To prove that the solutions of~\eqref{lambda_6} converge to the solution of~\eqref{lambda_ll} we will need a number of technical results. 
\end{comment}

\newpage

\section{Tick by tick price model}\label{chap3}

\subsection{Model description}
Following a growing number of papers (see for instance~\cite{JR1}, ~\cite{OMAR}) in which Hawkes processes are applied in modelling market prices of financial instruments we consider a tick by tick price model for one traded financial instrument. Imagine that we have only two types of orders: \emph{bid} (buy a specified amount of the instrument) and \emph{ask} (sell a specified amount of the instrument). To make our model resemble the actual trading in electronic markets we assume that every order is specified by a tuple $(x,y)$ where $x$ is the instantaneous intensity of the order execution and $y$ is the time it takes to complete the order. We implicitly assume that the market is liquid enough for us to be able to execute the order without any delays. Thus the total size of the order is given by $x$ times $y$. To take into account the endogenous nature of many orders in the modern electronic trading markets we assume that the bid/sell intensities depend on the current volatility of the instrument. To be more precise let the bid intensity be given by
\begin{eqnarray}\label{l+}
\lambda_t^+ &=& \lambda_0^+ + a_1\int_0^t\int_0^{\lambda_{s-}^+}\int_{[0,\infty)^2}x\mathbf{1}_{\{t-s<y\}}M_1(ds,du,dx,dy)\nonumber\\
&& \: + a_2\int_0^t\int_0^{\lambda_{s-}^-}\int_{[0,\infty)^2}x\mathbf{1}_{\{t-s<y\}}M_2(ds,du,dx,dy), \quad t>0.
\end{eqnarray}
There is a number of things to explain so let us start. Firstly, $(\lambda_t^+)_{t\geq 0}$ is the bidding intensity process, $a_1,a_2>0$ and $\lambda_0^+$ is the baseline exogenous bid intensity which is constant. Secondly, $M_1$ is a Poisson random measure on $[0,\infty)^4$ with intensity 
\begin{equation}\label{mean}
ds\otimes du\otimes \mathbb{P}_{X_1}(dx)\otimes \mathbb{P}_{Y_1}(dy),
\end{equation}
where $\mathbb{P}_{X_1}$ and $\mathbb{P}_{Y_1}$ are probability distributions on $[0,\infty)$.  $({\lambda_{s-}^-})_{s\geq 0}$ is the ask intensity process with $M_2$ being another Poisson random measure on $[0,\infty)^4$, independent of $M_1$, with intensity as in~\eqref{mean} but perhaps with a different distribution of the $(x,y)$ marks. This seems relatively complicated at first sight but its interpretation is actually very intuitive. Indeed, $\lambda_t^+$ is proportional to the current number of active bid orders, taking into account their size but not the duration of their execution, which is known only to the actual agents sending the orders anyway. The two random measures $M_1$ and $M_2$ represent the arrivals of the orders on the market with $M_1$ being bid orders responding to other bid orders and $M_2$ being bid orders responding to ask orders. To complete the picture we have
\begin{eqnarray}\label{l-}
\lambda_t^- &=& \lambda_0^+ + a_3\int_0^t\int_0^{\lambda_{s-}^+}\int_{[0,\infty)^2}x\mathbf{1}_{\{t-s<y\}}M_3(ds,du,dx,dy)\nonumber\\
&& \: + a_4\int_0^t\int_0^{\lambda_{s-}^-}\int_{[0,\infty)^2}x\mathbf{1}_{\{t-s<y\}}M_3(ds,du,dx,dy), \quad t>0,
\end{eqnarray}
which describes the evolution of ask intensity process. The notation is self-explanatory. We assume that the price at time $t\geq 0$ of the instrument under consideration is proportional to the difference between the total size of bid orders and the total size of ask orders until time $t$, that is
\begin{equation}\label{price}
P_t=N_t^+ - N_t^-,
\end{equation}
where
\begin{eqnarray}
N_t^+ &=& \int_0^t\int_0^{\lambda_{r-}^+}\int_{[0,\infty)} x\big(y\wedge (t-r)\big) M_1(dr,du,dx,dy)\nonumber\\
&& \: + \int_0^t\int_0^{\lambda_{r-}^-}\int_{[0,\infty)} x\big(y\wedge (t-r)\big) M_2(dr,du,dx,dy),\label{n_plus}
\end{eqnarray}
and
\begin{eqnarray}
N_t^- &=& \int_0^t\int_0^{\lambda_{r-}^-}\int_{[0,\infty)} x\big(y\wedge (t-r)\big) M_4(dr,du,dx,dy)\nonumber\\
&& \: + \int_0^t\int_0^{\lambda_{r-}^+}\int_{[0,\infty)} x\big(y\wedge (t-r)\big) M_3(dr,du,dx,dy).\label{n_minus}
\end{eqnarray}
In a reasonable market we expect the price to be a martingale. Notice that compensating the Poisson random measures $M_1,\ldots,M_4$, the price process $(P_t)_{t\geq 0}$ is a martingale as long for any $t>0$ (see Theorem 8.23 in~\cite{PZ})
\begin{eqnarray*}\label{mart_con}
+ &&\int_0^t \lambda_{s}^+ \mathbb{E}(X_1)\mathbb{E}\Big(\big(Y_1\wedge (t-s)\big)\Big)ds \\
+ && \int_0^t \lambda_{s}^- \mathbb{E}(X_2)\mathbb{E}\Big(\big(Y_2\wedge (t-s)\big)\Big)ds \\
- && \int_0^t \lambda_{s}^- \mathbb{E}(X_4)\mathbb{E}\Big(\big(Y_4\wedge (t-s)\big)\Big)ds \\
- && \int_0^t \lambda_{s}^+ \mathbb{E}(X_3)\mathbb{E}\Big(\big(Y_3\wedge (t-s)\big)\Big)ds \\
\end{eqnarray*}
is equal to $0$. Thus, it suffices to assume that $\mathbb{E}(X_1)=\mathbb{E}(X_3)$ and $\mathbb{E}(X_2)=\mathbb{E}(X_4)$ and that $Y_1,\ldots,Y_4$ are equal in law. In fact, for further tractability of the model we assume a little more. 
\begin{asum*}[\textbf{B}]\label{asB}
Assume that $\mathbb{E}(X_1)=\ldots =\mathbb{E}(X_4)$, $Y_1,\ldots,Y_4$ have the same distribution and that $X_1,\ldots,X_4,Y_1$ have finite variance.
\end{asum*}
We also make the following, somewhat technical assumption.
\begin{asum*}[\textbf{C}]\label{asC}
Assume that for any $i=1,\ldots,4$ $Y_i$ has a regularly varying density at infinity with index $-3-\gamma$ for some $\gamma>0$.
\end{asum*}
Finally, to make sure that the Hawkes-type processes do not explode in finite time we have to assume that for $i=1,\ldots,4$
\begin{equation}\label{radius}
s(A)\int_0^\infty \mathbb{P}(Y_1>s)ds<1
\end{equation}
where $s(A)$ is the spectral radius of the matrix
\begin{equation}\label{smatrix}
\begin{bmatrix}
    a_1\mathbb{E}(X_1)       & a_2\mathbb{E}(X_2) \\
    a_3\mathbb{E}(X_2)       & a_4\mathbb{E}(X_4)
\end{bmatrix}
.
\end{equation}

\subsection{Nearly unstable limit}

Now we would like to obtain a functional limit theorem for the model constructed in the previous section. Let $T>0$.  Following~\cite{JR1}, we will investigate the case when the quantity in~\eqref{radius} approaches $1$, i.e., the situation in which the market becomes increasingly endogenous. Assume first the $a_i$ coefficients in the matrix~\eqref{smatrix} are all equal to $a_T \in (0,1)$ and that 
\[\int_0^\infty \mathbb{P}(Y_1>s)ds = \mathbb{E}(Y_1)=1.\]
To study the nearly unstable limit we will rescale time by $T$ and and  at the same time take $a_T\rightarrow 1$ as $T\rightarrow\infty$. This approach was pioneered by Jaisson and Rosenbaum in~\cite{JR1} and later used by the same authors in~\cite{JR2}. 

Now we proceed very similarly as in Section~\ref{chap2_1}. Suppose that the assumptions (\textbf{B}) and (\textbf{C}) hold and, moreover, $\mathbb{E}(X_1)=1/2$, $\mathbb{E}(Y_1)=1$. Let $a_T$ be in $(0,1)$ such that 
\[ \lim_{T \rightarrow \infty}T(1-a_T)=\lambda,\]
for some $\lambda>0$ and put
\[m:=\frac{1}{2}\mathbb{E}(Y_1^2).\]

\subsubsection{Volatility processes}
In this section we find the scaling limits of the intensity processes $(\lambda_t^+)_{t\geq 0}$ and $(\lambda_t^-)_{t\geq 0}$ given by~\eqref{l+} and~\eqref{l-}, respectively. To formulate the theorem we need to introduce some additional notation. Define
\begin{equation}
\phi^T(z):=a_T\mathbb{P}(Y_1>z), \quad z\geq 0,
\end{equation}
and put 
\[\lambda_0^{T}:=\lambda_0^{T,+}+\lambda_0^{T,-}.\]
We may rewrite~\eqref{l+} as
\begin{eqnarray}\label{l++}
\lambda_t^{T,+}&:=&\lambda_0^{T,+} + \int_0^t \lambda_s^{T,+} \phi^T(t-s)ds\nonumber\\
&& \: + a_T\int_0^t\int_0^{\lambda_{s-}^{T,+}}\int_{[0,\infty)^2}x\mathbf{1}_{\{t-s<y\}}\widetilde{M_1}(ds,du,dx,dy)\nonumber\\
&& \: + a_T\int_0^t\int_0^{\lambda_{s-}^{T,-}}\int_{[0,\infty)^2}x\mathbf{1}_{\{t-s<y\}}\widetilde{M_2}(ds,du,dx,dy),
\end{eqnarray}
and similarly
\begin{eqnarray}\label{l--}
\lambda_t^{T,-}&=&\lambda_0^{T,-} + \int_0^t \lambda_s^{T,-} \phi^T(t-s)ds\nonumber\\
&& \: + a_T\int_0^t\int_0^{\lambda_{s-}^{T,+}}\int_{[0,\infty)^2}x\mathbf{1}_{\{t-s<y\}}\widetilde{M_3}(ds,du,dx,dy)\nonumber\\
&& \: + a_T\int_0^t\int_0^{\lambda_{s-}^{T,-}}\int_{[0,\infty)^2}x\mathbf{1}_{\{t-s<y\}}\widetilde{M_4}(ds,du,dx,dy).
\end{eqnarray}

The rescaled volatility processes are defined by
\begin{eqnarray}\label{rescaled_vol}
\widetilde{\lambda_t^{T,+}}&:=&\frac{1}{T}\lambda_{Tt}^{T,+},\\
\widetilde{\lambda_t^{T,-}}&:=&\frac{1}{T}\lambda_{Tt}^{T,-}.
\end{eqnarray}
Assume now that $\lambda_0^{T,+}=\lambda_0^+>0$, $\lambda_0^{T,-}=\lambda_0^->0$ for all $T>0$. We are ready to formulate the main theorem of this section.

\begin{theorem}\label{THM3}
For any $\tau>0$ the process
\begin{equation}\label{6-form}
\Bigg(\widetilde{\lambda_t^{T,+}}, \widetilde{\lambda_t^{T,-}}, W_t^{T,1}, W_t^{T,2}, W_t^{T,3}, W_t^{T,4}\Bigg)_{t\geq 0}
\end{equation}
is tight in $\mathcal{D}_{\mathbb{R_+}^2\times \mathbb{R}^4}[0,\tau]$ and converges weakly as $T\rightarrow \infty$ to
\begin{equation}\label{6-limit}
\Bigg(\widetilde{\lambda_t^+}, \widetilde{\lambda_t^-}, W_t^1, W_t^2, W_t^3, W_t^4\Bigg)_{t\geq 0}
\end{equation}
where
\begin{eqnarray}\label{log8}
\widetilde{\lambda_t^+}&=& \lambda_0^+\int_0^t\theta(t-s)ds + \int_0^t\theta(t-r)\sqrt{\widetilde{\lambda_{r-}^+}}dW_r^1\nonumber + \int_0^t\theta(t-r)\sqrt{\widetilde{\lambda_{r-}^-}}dW_r^2 \nonumber \\
\widetilde{\lambda_t^-}&=& \lambda_0^-\int_0^t\theta(t-s)ds + \int_0^t\theta(t-r)\sqrt{\widetilde{\lambda_{r-}^+}}dW_r^3\nonumber + \int_0^t\theta(t-r)\sqrt{\widetilde{\lambda_{r-}^-}}dW_r^4, \nonumber \\
\end{eqnarray}
and $\big(W_t^1, W_t^2, W_t^3, W_t^4\big)_{t\geq 0}$ is a four-dimensional Brownian motion with diagonal covariance matrix given by~\eqref{cov}.
\end{theorem}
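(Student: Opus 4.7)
The plan is to reduce Theorem~\ref{THM3} to a coordinatewise version of the argument carrying Theorem~\ref{THM1}, taking advantage of the fact that the two intensities $\lambda^{T,+}$ and $\lambda^{T,-}$ are governed by the same scalar convolution kernel $\Phi^T(z)=a_T\mathbb{P}(Y_1>z)$ (by Assumption~\textbf{B} the four $Y_i$ share a common law). I would first apply Lemma~\ref{lem_1} separately to~\eqref{l++} and~\eqref{l--}, obtaining a resolvent $\Psi^T$ which, after the rescaling $t\mapsto Tt$, satisfies Lemma~\ref{lem_T0} and in particular gives the limit kernel $\theta(z)=\frac{1}{m}\exp(-\lambda z/m)$. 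This yields for each sign $\pm$ an equation structurally identical to~\eqref{lambda_7}: a deterministic term converging to $\lambda_0^\pm\int_0^t\theta(t-s)\,ds$, two martingale integrals driven by $\widetilde{M_1},\widetilde{M_2}$ (respectively $\widetilde{M_3},\widetilde{M_4}$) against the kernel $\theta(t-r)$, and remainders of type $R^{T,1},R^{T,2},R^{T,3}$ whose uniform convergence to zero is a word-for-word repetition of Lemma~\ref{r_lem}.

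I would then apply the truncation trick at level $K(T)\to\infty$ exactly as in the proof of Theorem~\ref{THM1}, showing by Doob's $L^2$ inequality and the finite variance provided by Assumption~\textbf{B} that the parts of the stochastic integrals on $\{X_i>K(T)\}\cup\{Y_i>K(T)\}$ vanish in $\mathcal{D}[0,\tau]$. The surviving martingale pieces then take, exactly as in~\eqref{W_def}, the stochastic-integral form
\begin{equation*}
\int_0^t \theta(t-r)\sqrt{\widetilde{\lambda}_{r-}^{T,\pm}}\,dW_r^{T,i},\qquad i=1,\ldots,4,
\end{equation*}
where each $W^{T,i}$ is a properly normalised compensated integral against $\widetilde{M_i}$ with the appropriate $\pm$ intensity in the upper limit of the $du$-integration.

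The crucial new point compared to Section~\ref{chap2_1} is the joint convergence of the four-dimensional driver $(W^{T,1},W^{T,2},W^{T,3},W^{T,4})$ to a Brownian motion with \emph{diagonal} covariance. Marginal convergence of each coordinate to $\sigma_1\sigma_2 W$ (with $\sigma_1^2=\mathbb{E}(X_1^2)$, $\sigma_2^2=\mathbb{E}(Y_1^2)$) is Lemma~\ref{BM_lem}. For the off-diagonal entries I would observe that the Poisson measures $M_1,\ldots,M_4$ are by assumption mutually independent, so for $i\neq j$ the predictable quadratic covariations $[W^{T,i},W^{T,j}]$ vanish identically; combining this with tightness of the individual coordinates and the functional central limit theorem for multivariate local martingales with asymptotically deterministic covariance gives the independence of the limiting Brownian components and hence the diagonal structure~\eqref{cov}.

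Finally I would establish tightness of the full six-dimensional vector~\eqref{6-form} in $\mathcal{D}_{\mathbb{R}_+^2\times\mathbb{R}^4}[0,\tau]$ by the Aldous--Rebolledo criterion applied coordinatewise (the volatility components inherit tightness from the uniform moment bounds $\sup_{T,r\leq\tau}\mathbb{E}\widetilde{\lambda}_r^{T,\pm}<\infty$ as in Section~\ref{chap2_1}), and identify the limit via Theorem~5.4 in~\cite{PROTTER}. The one genuine obstacle is weak uniqueness for the coupled system~\eqref{log8}: it is an affine two-dimensional CIR-type SDE with $\frac{1}{2}$-H\"older diffusion, so uniqueness in law does not follow from the scalar Yamada--Watanabe theorem directly. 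I would appeal instead either to the affine-processes framework for multivariate square-root diffusions, or to a resolvent/characteristic-function computation in the spirit of~\cite{JR1} exploiting the exponential form of $\theta$; modulo this step everything else is a transcription of the proof of Theorem~\ref{THM1} with the independence of $M_1,\ldots,M_4$ doing the extra work required in the limit identification.
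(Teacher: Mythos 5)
Your proposal follows essentially the same route as the paper: Lemma~\ref{lem_1} applied coordinatewise to~\eqref{l++} and~\eqref{l--}, the rescaling, truncation at $K(T)$ and remainder estimates transcribed from Section~\ref{chap2_1}, convergence of the four-dimensional martingale driver to a Brownian motion with diagonal covariance via the mutual independence of $M_1,\dots,M_4$ (the paper invokes Theorem~2.1 of~\cite{WHITT} for exactly this), and relative compactness plus limit identification via Theorem~5.4 of~\cite{PROTTER}. The only point of divergence is the final uniqueness step for the coupled system~\eqref{log8}: the paper disposes of it by citing a multidimensional Yamada--Watanabe theorem~\cite{MYW}, whereas you (not unreasonably) flag the $\tfrac{1}{2}$-H\"older diffusion coefficients as an obstacle and propose the affine square-root-diffusion framework instead --- a legitimate alternative that, if anything, is more careful than the paper's one-line citation.
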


\begin{rem}
In fact the unique global solution to\eqref{log8} is continuous and if we put $X_t^+:=\lambda \lambda_t^+$, $X_t^-:=\lambda \lambda_t^-$, $x_0^+:=\lambda \lambda_0^+$ and $x_0^-:=\lambda \lambda_0^-$, then by applying integration by parts one may show that these processes satisfy
\begin{eqnarray}\label{log9}
X_t^+&=& \int_0^t(x_0^+ - X_s^+)\frac{\lambda}{m}ds + \frac{\sqrt{\lambda}}{m}\int_0^t\sqrt{X_s^+}dW_r^1\nonumber + \frac{\sqrt{\lambda}}{m}\int_0^t\sqrt{X_s^-}dW_r^2\nonumber  \\
X_t^-&=& \int_0^t(x_0^- - X_s^-)\frac{\lambda}{m}ds + \frac{\sqrt{\lambda}}{m}\int_0^t\sqrt{X_s^+}dW_r^3\nonumber + \frac{\sqrt{\lambda}}{m}\int_0^t\sqrt{X_s^-}dW_r^4.\nonumber  \\
\end{eqnarray}
\end{rem}

\begin{rem}
If we  put $\lambda_0:=\lambda_0^+ +\lambda_0^-$, define
\begin{equation}\label{r1}
\widetilde{\lambda_t}:=\widetilde{\lambda_t^+}+\widetilde{\lambda_t^-}, \quad t\geq 0,
\end{equation}
and 
\begin{equation}\label{z_def}
Z_t:=\int_0^t\frac{\sqrt{\widetilde{\lambda_r^+}}}{\sqrt{\widetilde{\lambda_r}}}d\big(W^1_r+W^3_r\big) + 
\int_0^t\frac{\sqrt{\widetilde{\lambda_r^-}}}{\sqrt{\widetilde{\lambda_r}}}d\big(W^2_r+W^4_r\big),
\end{equation}
then, we see that $(\widetilde{\lambda_t})_{t\geq 0}$ satisfies a CIR-type equation.
\begin{equation}\label{r3}
\widetilde{\lambda_t}=\lambda_0\int_0^t\theta(t-s)ds + \int_0^t \sqrt{\widetilde{\lambda_r}}dZ_r, \quad t\geq 0.
\end{equation}
Furthermore, provided $X_1,\ldots,X_4$ have equal variance, $(Z_t)_{t\geq 0}$ is, up to a multiplicative constant, a Brownian motion and we recover the original CIR process.
\end{rem}

\begin{proof}[Proof of Theorem~\ref{THM3}]
Let us concentrate on the process $(\lambda_t^{T,+})_{t\geq 0}$. Using Lemma~\ref{lem_1} we have
\begin{eqnarray}\label{log1}
\lambda_t^{T,+}&=& \lambda_0^{T,+} + \lambda_0^{T,+}\int_0^t \Psi^T(t-s)ds \\
&& \: \int_0^t\int_0^{\lambda_{r-}^{T,+}}\int_{[0,\infty)^2}a_Tx\Big(\mathbf{1}_{\{t-r<y\}} \nonumber\\
&& \: + \int_0^{t-r}\Psi^T(t-r-s)\mathbf{1}_{\{s<y\}}ds\Big)\widetilde{M_1}(dr,du,dx,dy)\nonumber\\
&& \: + \int_0^t\int_0^{\lambda_{r-}^{T,-}}\int_{[0,\infty)^2}a_Tx\Big(\mathbf{1}_{\{t-r<y\}} \nonumber\\
&& \: + \int_0^{t-r}\Psi^T(t-r-s)\mathbf{1}_{\{s<y\}}ds\Big)\widetilde{M_2}(dr,du,dx,dy),\nonumber
\end{eqnarray}
where $\Psi^T(w):= \sum_{k=1}^\infty \big(\phi^T\big)^{\ast k}(w)$, $w\geq 0$. 
we can rewrite~\eqref{log1} as
\begin{eqnarray}\label{log2}
\widetilde{\lambda_t^T}&=& \lambda_0^{T,+}/T + \lambda_0^{T,+}\int_0^t \Psi^T\big(T(t-s)\big)ds \\
&& \: + \int_0^t\int_0^{\widetilde{\lambda_{r-}^{T,+}}}\int_{[0,\infty)^2}a_Tx\Big(\mathbf{1}_{\{T(t-r)<y\}} \nonumber\\
&& \: + \int_0^{t-r}\Psi^T(T(t-r-s))\mathbf{1}_{\{Ts<y\}}ds\Big)\widetilde{M_1}(Tdr,Tdu,dx,dy)\nonumber \\
&& \: + \int_0^t\int_0^{\widetilde{\lambda_{r-}^{T,-}}}\int_{[0,\infty)^2}a_Tx\Big(\mathbf{1}_{\{T(t-r)<y\}} \nonumber\\
&& \: + \int_0^{t-r}\Psi^T(T(t-r-s))\mathbf{1}_{\{Ts<y\}}ds\Big)\widetilde{M_2}(Tdr,Tdu,dx,dy)\nonumber
\end{eqnarray}
Using the same arguments as in Section~\ref{chap2_1} we can write (putting $\theta(w):=\frac{1}{\lambda}\exp(-\lambda w/m)$ for $w\geq 0$) 
\begin{eqnarray}\label{log3}
\widetilde{\lambda_t^{T,+}}&=& R_t^{T,+} + \lambda_0^{T,+}\int_0^t\theta(t-s)ds \\
&& \: + \int_0^t\int_0^{\widetilde{\lambda_{r-}^{T,+}}}\int_{[0,\infty)^2}\theta(t-r)\frac{a_T}{T}xy\widetilde{M_1}(Tdr,Tdu,dx,dy)\nonumber \\
&& \: + \int_0^t\int_0^{\widetilde{\lambda_{r-}^{T,-}}}\int_{[0,\infty)^2}\theta(t-r)\frac{a_T}{T}xy\widetilde{M_2}(Tdr,Tdu,dx,dy)\nonumber \\
\end{eqnarray}
where the process $(R_t^{T,+})_{t\geq 0}$ converges to $0$ in $\mathcal{D}[0,\tau]$ for any $\tau>0$. Equation~\eqref{log3} can be further rewritten as
\begin{eqnarray}\label{log4}
\widetilde{\lambda_t^{T,+}}&=& R_t^{T,+} + \lambda_0^{T,+}\int_0^t\theta(t-s)ds \nonumber\\
&& \: +a_T\int_0^t\theta(t-r)\sqrt{\widetilde{\lambda_{r-}^{T,+}}}dW_r^{T,1}\nonumber \\
&& \: +a_T\int_0^t\theta(t-r)\sqrt{\widetilde{\lambda_{r-}^{T,-}}}dW_r^{T,2} 
\end{eqnarray}
and similarly
\begin{eqnarray}\label{log6}
\widetilde{\lambda_t^{T,-}}&=& R_t^{T,-} + \lambda_0^{T,-}\int_0^t\theta(t-s)ds\nonumber  \\
&& \: +a_T\int_0^t\theta(t-r)\sqrt{\widetilde{\lambda_{r-}^{T,+}}}dW_r^{T,3}\nonumber \\
&& \: +a_T\int_0^t\theta(t-r)\sqrt{\widetilde{\lambda_{r-}^{T,-}}}dW_r^{T,4},
\end{eqnarray}
where
\begin{eqnarray}\label{log5}
W_t^{T,i} &=& \int_0^t\int_0^{\widetilde{\lambda_{r-}^{T,+}}}\int_{[0,\infty)^2}\frac{1}{T}xy\Big(\widetilde{\lambda_{r-}^{T,+}}\Big)^{-\frac{1}{2}}\widetilde{M_i}(Tdr,Tdu,dx,dy),
\end{eqnarray}
for $i=1,3$ and
\begin{eqnarray}\label{log5*}
W_t^{T,i} &=& \int_0^t\int_0^{\widetilde{\lambda_{r-}^{T,-}}}\int_{[0,\infty)^2}\frac{1}{T}xy\Big(\widetilde{\lambda_{r-}^{T,-}}\Big)^{-\frac{1}{2}}\widetilde{M_i}(Tdr,Tdu,dx,dy),
\end{eqnarray}
for $i=2,4$. Just as in Lemma~\ref{BM_lem}, using Theorem 2.1 in~\cite{WHITT} one can easily show that the process
\[\Big(W_r^{T,1}, W_r^{T,2}, W_r^{T,3}, W_r^{T,4}\Big)_{r\geq 0} \]
converges in $\mathcal{D}_{\mathbb{R}^4}[0,\infty)$ as $T\rightarrow \infty$ to a four-dimensional Brownian motion with diagonal covariance matrix with diagonal terms given by
\begin{multline}\label{cov}
\Bigg(\mathbb{E}(X_1^2)\mathbb{E}(Y_1^2), \mathbb{E}(X_2^2)\mathbb{E}(Y_1^2), 
\mathbb{E}(X_3^2)\mathbb{E}(Y_1^2), \mathbb{E}(X_4^2)\mathbb{E}(Y_1^2)\Bigg).
\end{multline}
We end up with a system of equations
\begin{eqnarray}\label{log7}
\widetilde{\lambda_t^{T,+}}&=& R_t^{T,+} + \lambda_0^{T,+}\int_0^t\theta(t-s)ds \\
&& \: +a_T\int_0^t\theta(t-r)\sqrt{\widetilde{\lambda_{r-}^{T,+}}}dW_r^{T,1}\nonumber \\
&& \: +a_T\int_0^t\theta(t-r)\sqrt{\widetilde{\lambda_{r-}^{T,-}}}dW_r^{T,2}\nonumber \\
\widetilde{\lambda_t^{T,-}}&=& R_t^{T,-} + \lambda_0^{T,-}\int_0^t\theta(t-s)ds \\
&& \: +a_T\int_0^t\theta(t-r)\sqrt{\widetilde{\lambda_{r-}^{T,+}}}dW_r^{T,1}\nonumber \\
&& \: +a_T\int_0^t\theta(t-r)\sqrt{\widetilde{\lambda_{r-}^{T,-}}}dW_r^{T,2}\nonumber. 
\end{eqnarray}
Using Theorem 5.4 in~\cite{PROTTER} and the multidimensional version of Yamada-Watanabe theorem (see for example~\cite{MYW}) the result follows directly.
\end{proof}

\subsubsection{Price process}

Given Theorem~\ref{THM3} it is now relatively straightforward to investigate the scaling limit of the price process given by~\eqref{price}. Let $\widetilde{P_t^T}:=\frac{1}{T}P_{Tt}^T$, where 
\begin{equation}
P_t^T:=N_t^{T,+}-N_t^{T,-},
\end{equation}
with
\begin{eqnarray}\label{loc9}
N_t^{T,+} &:=& \int_0^t\int_0^{\lambda_{r-}^{T,+}}\int_{[0,\infty)} x\big(y\wedge (t-s)\big) M_1(dr,du,dx,dy)\nonumber\\
&& \: + \int_0^t\int_0^{\lambda_{r-}^{T,-}}\int_{[0,\infty)} x\big(y\wedge (t-s)\big) M_2(dr,du,dx,dy)
\end{eqnarray}
and
\begin{eqnarray}\label{loc10}
N_t^{T,-} &:=& \int_0^t\int_0^{\lambda_{r-}^{T,+}}\int_{[0,\infty)} x\big(y\wedge (t-s)\big) M_3(dr,du,dx,dy)\nonumber\\
&& \: + \int_0^t\int_0^{\lambda_{r-}^{T,-}}\int_{[0,\infty)} x\big(y\wedge (t-s)\big) M_4(dr,du,dx,dy).
\end{eqnarray}
Hence, given all our assumptions, we have
\begin{eqnarray}\label{loc11}
\widetilde{P_t^T} &=& + \int_0^t\int_0^{\widetilde{\lambda_{r-}^{T,+}}}\int_{[0,\infty)} \frac{1}{T}x\big(y\wedge (T(t-s))\big) \widetilde{M_1}(Tdr,Tdu,dx,dy)\nonumber\\ 
&& \: + \int_0^t\int_0^{\widetilde{\lambda_{r-}^{T,-}}}\int_{[0,\infty)} \frac{1}{T}x\big(y\wedge (T(t-s))\big) \widetilde{M_2}(Tdr,Tdu,dx,dy) \nonumber\\
&& \: - \int_0^t\int_0^{\widetilde{\lambda_{r-}^{T,+}}}\int_{[0,\infty)} \frac{1}{T}x\big(y\wedge (T(t-s))\big) \widetilde{M_3}(Tdr,Tdu,dx,dy) \nonumber\\
&& \: - \int_0^t\int_0^{\widetilde{\lambda_{r-}^{T,-}}}\int_{[0,\infty)} \frac{1}{T}x\big(y\wedge (T(t-s))\big) \widetilde{M_4}(Tdr,Tdu,dx,dy).
\end{eqnarray}
By Doob's $L^2$ inequality one can easily show that under Assumption (\textbf{C}), for $i=1,3$ the processes
\begin{equation}
D^{T,i}:= \int_0^t\int_0^{\widetilde{\lambda_{r-}^{T,+}}}\int_{[0,\infty)} \frac{1}{T}x\Big(\big(y\wedge (T(t-s))\big) - y\Big) \widetilde{M_i}(Tdr,Tdu,dx,dy),
\end{equation}
and 
\begin{equation}
D^{T,i}:= \int_0^t\int_0^{\widetilde{\lambda_{r-}^{T,-}}}\int_{[0,\infty)} \frac{1}{T}x\Big(\big(y\wedge (T(t-s))\big) - y\Big) \widetilde{M_i}(Tdr,Tdu,dx,dy)
\end{equation}
for $i=2,4$, converge to zero in $\mathcal{D}[0,\tau]$ for any $\tau>0$. Therefore we get
\begin{eqnarray}\label{loc12}
\widetilde{P_t^T} &=& \int_0^t \sqrt{\widetilde{\lambda_{r-}^{T,+}}}d\Big(W^{T,1}_r-W^{T,3}_r\Big) \nonumber\\
&& \: - \int_0^t \sqrt{\widetilde{\lambda_{r-}^{T,+}}}d\Big(W^{T,4}_r-W^{T,2}_r\Big).
\end{eqnarray}
Defining
\begin{eqnarray}\label{loc13}
V_t^T&:=& \int_0^t \frac{\sqrt{\widetilde{\lambda_{r-}^{T,+}}}}{\sqrt{\widetilde{\lambda_{r-}^{T}}}}d\Big(W^{T,1}_r-W^{T,3}_r\Big) \nonumber\\
&& \: + \int_0^t \frac{\sqrt{\widetilde{\lambda_{r-}^{T,-}}}}{\sqrt{\widetilde{\lambda_{r-}^{T}}}}d\Big(W^{T,2}_r-W^{T,4}_r\Big),
\end{eqnarray}
where
\[ \widetilde{\lambda_{t}^{T}}:=\widetilde{\lambda_{r-}^{T,+}}+\widetilde{\lambda_{r-}^{T,-}}, \quad t\geq 0,\]
we see that
\begin{eqnarray}\label{loc14}
\widetilde{P_t^T} &=& \int_0^t \sqrt{\widetilde{\lambda_{r-}^{T}}}dV_r^T.
\end{eqnarray}

Using Theorem~\ref{THM3} and Theorem 4.6 in~\cite{PROTTER} we obtain the following result.

\begin{theorem}\label{THM4}
For any $\tau>0$ the process the triple $(\widetilde{P_t^T}, \widetilde{\lambda_t^T}, V_t^T)_{t\geq 0}$ converges weakly in $\mathcal{D}_{\mathbb{R}^3}[0,\tau]$ and the limit $(\widetilde{P_t}, \widetilde{\lambda_t}, V_t)_{t\geq 0}$ is such that
\begin{equation}\label{4_1}
\widetilde{P_t} = \int_0^t\sqrt{\widetilde{\lambda_s}}dV_s, \quad t\geq 0,
\end{equation}
where $(V_t)_{t\geq 0}$ is, the process given by
\begin{eqnarray}\label{v_def}
V_t&:=& \int_0^t \frac{\sqrt{\widetilde{\lambda_{r-}^{+}}}}{\sqrt{\widetilde{\lambda_{r-}}}}d\Big(W^{1}_r-W^{3}_r\Big) \nonumber\\
&& \: + \int_0^t \frac{\sqrt{\widetilde{\lambda_{r-}^{-}}}}{\sqrt{\widetilde{\lambda_{r-}}}}d\Big(W^{2}_r-W^{4}_r\Big).
\end{eqnarray}
In other words we obtain a Heston-type model with stochastic volatility given by $(\widetilde{\lambda_t})_{t\geq 0}$. The process $(V_t)_{t\geq 0}$ can be viewed as a stochastic mixture of Brownian motions, and is an actual Brownian motion (up to a multiplicative constant) if we assume that $X_1,\ldots,X_4$ have equal variance.
\end{theorem}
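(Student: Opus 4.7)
The plan is to bootstrap Theorem~\ref{THM4} from Theorem~\ref{THM3} via two successive applications of stability of stochastic integrals. First I would confirm the reduction from~\eqref{loc11} to the cleaner form~\eqref{loc14}, i.e. that the four ``boundary truncation'' processes
\[
D^{T,i}_t = \int_0^t\!\!\int_0^{\widetilde{\lambda_{r-}^{T,\pm}}}\!\!\int_{[0,\infty)^2} \tfrac{1}{T} x\bigl(y\wedge T(t-r) - y\bigr)\, \widetilde{M_i}(T\,dr,T\,du,dx,dy)
\]
tend to $0$ in $\mathcal{D}[0,\tau]$. By Doob's $L^2$ inequality this reduces to controlling $T^{-2}\int_0^{T\tau}\!\mathbb{E}[X_i^2]\,\mathbb{E}\bigl[(Y_1 - Y_1\wedge (T(t-r)))^2\bigr] \mathbb{E}\widetilde{\lambda^{T,\pm}}_r\,dr$, which decays because Assumption~(\textbf{C}) forces $\mathbb{E}[(Y_1-Y_1\wedge z)^2]$ to have polynomial decay in $z$ of order strictly better than $z^{-1}$. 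This leaves
$\widetilde{P_t^T} = \int_0^t \sqrt{\widetilde{\lambda_{r-}^T}}\,dV_r^T$ up to a term tending to $0$ uniformly.

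Next I would promote the weak convergence of Theorem~\ref{THM3} to joint weak convergence of $\bigl(\widetilde{\lambda^{T,+}}, \widetilde{\lambda^{T,-}}, W^{T,1},W^{T,2},W^{T,3},W^{T,4}\bigr)$ together with $V^T$. Since $V_t^T$ is a stochastic integral against the four-dimensional semimartingale $(W^{T,i})$ with integrands $\sqrt{\widetilde{\lambda^{T,\pm}_{r-}}/\widetilde{\lambda^{T}_{r-}}}$ that are predictable and bounded by $1$, Theorem~5.4 in~\cite{PROTTER} (stability of stochastic integrals with respect to UCP-convergent ``good'' integrators) applies: the limiting driver is the four-dimensional Brownian motion from Theorem~\ref{THM3} and the limiting integrand is $\sqrt{\widetilde{\lambda^{\pm}}/\widetilde{\lambda}}$, continuous in the $\widetilde{\lambda^{\pm}}$ variables wherever $\widetilde{\lambda}>0$. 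Hence $V^T \Rightarrow V$ in $\mathcal{D}[0,\tau]$ jointly with $(\widetilde{\lambda^{T,+}}, \widetilde{\lambda^{T,-}})$.

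Finally I would apply the stability theorem a second time to the outer integral, now with integrand $\sqrt{\widetilde{\lambda_{r-}^T}}$ and integrator $V^T$. Joint convergence of integrand and integrator, together with the continuity of $x\mapsto\sqrt{x}$ on $[0,\infty)$, yields $\widetilde{P_t^T}\Rightarrow \int_0^t\sqrt{\widetilde{\lambda_s}}\,dV_s$ in $\mathcal{D}[0,\tau]$, and the triple $(\widetilde{P^T},\widetilde{\lambda^T},V^T)$ converges jointly. The main technical obstacle is ensuring that the ratios $\sqrt{\widetilde{\lambda^{T,\pm}}/\widetilde{\lambda^T}}$ behave well as integrands: one must argue that $\widetilde{\lambda^T}$ stays bounded away from $0$ on $[0,\tau]$ with arbitrarily high probability (so that the square-root ratios are continuous functionals on the path space), which follows from $\lambda_0^\pm>0$ and the explicit integral representation~\eqref{log8} of the limiting $\widetilde{\lambda^\pm}$ as strictly positive CIR-type processes. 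The identification of $V$ as a time-changed (or, under equal $\mathrm{Var}(X_i)$, plain) Brownian motion is then a routine quadratic-variation computation using the diagonal covariance~\eqref{cov}.
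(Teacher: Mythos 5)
Your proposal matches the paper's own argument: the paper performs exactly the same reduction of $\widetilde{P_t^T}$ to $\int_0^t\sqrt{\widetilde{\lambda_{r-}^{T}}}\,dV_r^T$ by killing the truncation terms $D^{T,i}$ with Doob's $L^2$ inequality, and then settles the limit in a single sentence by invoking Theorem~\ref{THM3} together with the Kurtz--Protter stability theorem for stochastic integrals (Theorem 4.6 in~\cite{PROTTER}), which is precisely your two-stage stability argument compressed into one citation. The additional care you take --- treating the inner integral defining $V^T$ and the outer integral separately, and flagging the continuity of the ratios $\sqrt{\widetilde{\lambda^{T,\pm}}/\widetilde{\lambda^{T}}}$ where $\widetilde{\lambda^{T}}$ is small --- goes beyond what the paper records, and your one slip (claiming Assumption (\textbf{C}) gives decay of $\mathbb{E}\big[(Y_1-Y_1\wedge z)^2\big]$ faster than $z^{-1}$, which fails for $\gamma<1$) is harmless, since dominated convergence using only $\mathbb{E}(Y_1^2)<\infty$ already makes the $D^{T,i}$ terms vanish.
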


\begin{rem}
In general, the \emph{noises} $(Z_t)_{t\geq 0}$ and $(V_t)_{t\geq 0}$, given by~\eqref{z_def} and~\eqref{v_def} respectively are dependent and can be positively or negatively correlated.
\end{rem}

\begin{appendices}

\section{Technical lemmas and proofs}\label{tec}

In this section we prove the lemmas from the previous ones.

\begin{comment}
\begin{lem}\label{lem_psi}
The function $z\mapsto \Psi^T(Tz)$ is bounded uniformly in $T$ and $z$ and for each $z>0$ we have
\begin{equation}
\lim_{T\rightarrow \infty} \Psi^T(Tz) = \frac{1}{m}\exp\left(-\frac{z\lambda}{m}\right),
\end{equation}
where $m = \int_0^\infty z\Phi(z)dz = \frac{1}{2}\mathbb{E}(Y^2)$.
\end{lem}

\begin{lem}\label{lem_noise}
For any $\tau>0$ and $X,Y>0$ with $\mathbb{E}(X^2),\mathbb{E}(Y^2)<\infty$
\begin{multline}
\left(\int_0^{t_1}\int_0^{t_2}\int_0^\infty \int_0^\infty x y \frac{1}{T}\widetilde{M}(Tdr,Tdu,dx,dy)\right)_{0\leq t_1,t_1 \leq \tau}\\
\Rightarrow \sigma\left(W(t_1,t_2)\right)_{0\leq t_1,t_1 \leq \tau}, 
\end{multline}
weakly in $\mathcal{D}([0,\tau]^2)$, where $W$ is a two dimensional white noise, $\widetilde{M}$ is a compensated Poisson random measure with intensity
\[Tds\otimes Tdu \otimes \mathbb{P}_X(dx) \otimes \mathbb{P}_Y(dy)\]
and $\sigma=(\mathbb{E}(X^2)\mathbb{E}(Y^2))^{1/2}$.
\end{lem}

\end{comment}

\begin{comment}

\begin{lem}\label{lem_T1}
We have the following convergence
\begin{multline}
\lim_{T\rightarrow \infty} T^2 \mathbb{E}_Y\Big(\int_0^t\Psi^T(T(t-s))\mathbf{1}_{\{Ts<Y\}}ds\\
 - Y/T\frac{1}{m}\exp\big(-\frac{\lambda(t-r)}{m}\big)\Big)^2 = 0.
\end{multline}
\end{lem}

\end{comment}

\begin{proof}[Proof of Lemma~\ref{r_lem}]
Showing that $(R^{T,1}_t)$ converges to $0$ in  $\mathcal{D}([0,\tau]$ as $T\rightarrow \infty$ for any $\tau>0$ is relatively easy, see~\cite{JR1}. Notice that by Theorem 8.23 in~\cite{PZ} $(R^{T,2}_t)$ is a square-integrable martingale. Thus, by Doob's inequality 
\begin{equation}
\mathbb{E}\Big(\sup_{t\leq \tau}R^{T,2}_t\Big)^2 \leq 4 \mathbb{E}\Big(R^{T,2}_\tau\Big)^2.
\end{equation}
Now notice that for any $t>0$
\begin{eqnarray}
\mathbb{E}\Big(R^{T,2}_t\Big)^2 &=& \mathbb{E}\Bigg(\int_0^t\int_0^{\widetilde{\lambda}^T_{r-}}\mathbb{E}_{X,Y}\Big(a_Tx\mathbf{1}_{\{T(t-r)<y\}}\Big)^2drdu\Bigg)\\
&\leq& \mathbb{E}\Bigg(\int_0^t\int_0^{\widetilde{\lambda}^T_{r-}}\mathbb{E}(X^2)\mathbb{P}(Y\geq T(t-r))drdu\Bigg)\\
&=& \int_0^t\mathbb{E}\big(\widetilde{\lambda}^T_{r-}\big)\mathbb{E}(X^2)\mathbb{P}(Y\geq T(t-r))dr\\
&=& \int_0^t\mathbb{E}\big(\widetilde{\lambda}^T_{r}\big)\mathbb{E}(X^2)\mathbb{P}(Y\geq T(t-r))dr.\label{loc_2},\\
\end{eqnarray}
which converges to zero by dominated convergence. This implies that $(R^{T,2}_t)$ converges to $0$ weakly in $\mathcal{D}([0,\tau]$ since it means that for any $\epsilon>0$
\begin{equation}
\lim_{T\rightarrow\infty}\mathbb{P}(\sup_{t\in[0,\tau]}|R^{T,2}_t|>\epsilon)\leq c_3 \frac{\mathbb{E}\Big(R^{T,2}_\tau\Big)^2}{\epsilon^2}=0.
\end{equation}
\\

The hardest part is showing that $(R^{T,3}_t)$ converges to $0$ weakly in $\mathcal{D}([0,\tau])$. First we will show that
\begin{multline}\label{loc_3}
\lim_{T\rightarrow \infty} T^2\mathbb{E}_{Y}\Bigg(\int_0^{t-r}\Psi^T(T(t-r-s))\mathbf{1}_{\{Ts<Y\}}ds\\
 - \frac{Y}{T}\frac{1}{m}\exp\big(-\frac{\lambda}{m}(t-r)\big)\Bigg)^2=0
\end{multline}
for any $0\leq r <t <\infty$. By Lemma~\ref{lem_T0} for any $\epsilon>0$ we have 
\begin{equation}
\left|\Psi^T(Tz) - \frac{1}{m}\exp\left(-\frac{z\lambda}{m}\right)\right|\leq \epsilon
\end{equation}
for all $T$ large enough. Thus,
\begin{eqnarray}
&&T^2\mathbb{E}_{Y}\Bigg(\int_0^{t-r}\Big(\Psi^T(T(t-r-s))\label{loc_13}\\
&&\: - \frac{1}{m}\exp\big(-\frac{\lambda}{m}(t-r)\big)\Big)\mathbf{1}_{\{Ts<Y\}}ds\Bigg)^2 \nonumber\\
&\leq& \epsilon^2T^2\mathbb{E}_{Y}\Bigg(\int_0^{t-r}\mathbf{1}_{\{Ts<Y\}}ds\Bigg)^2 \nonumber\\
&=& \epsilon^2T^2\mathbb{E}_{Y}\Bigg(\mathbf{1}_{\{T(t-r)<Y\}}(t-r)\Bigg)^2 \nonumber\\
&& \: + \epsilon^2T^2\mathbb{E}_{Y}\Bigg(\int_0^{Y/T}ds\Bigg)^2 \nonumber\\
&\leq& \epsilon^2(T(t-r))^2 \mathbb{P}(Y\geq T(t-r)) \nonumber\\
&& \: + \epsilon^2\mathbb{E}_{Y}(Y^2). \nonumber
\end{eqnarray}
Since by assumption $\mathbb{E}(Y^2)<\infty$ we have that $z\mapsto z^2\mathbb{P}(Y>z)$ is bounded, we see that~\eqref{loc_13} holds. Notice that 
\begin{eqnarray}
&&T^2\mathbb{E}_{Y}\Bigg(\mathbf{1}_{\{T(t-r)<Y\}}\int_0^{t-r}\frac{1}{m}\exp\big(-\frac{\lambda}{m}(t-r-s)\big)ds\Bigg)^2 \\
&\leq& (T(t-r))^2 \mathbb{P}(Y\geq T(t-r)),
\end{eqnarray}
which, by assumption is bounded and converges to zero as $T\rightarrow \infty$. Hence, we only have to show that
\begin{multline}\label{loc_4}
T^2\mathbb{E}_{Y}\Bigg(\int_0^{Y/T}\frac{1}{m}\exp\big(-\frac{\lambda}{m}(t-r-s)\big)ds \\
-YT^{-1}\frac{1}{m}\exp\big(-\frac{\lambda}{m}(t-r)\big) \Bigg)^2
\end{multline}
converegs to $0$. Notice that~\eqref{loc_4} equals
\begin{equation}
\mathbb{E}_Y\Bigg(\int_0^Y\Big(\frac{1}{m}\exp\big(\frac{\lambda s}{mT}\big)-1\Big)ds\Bigg)^2 \Bigg(\frac{1}{m}\exp\big(-\lambda(t-r)/m\big)\Bigg)^2,
\end{equation}
which converges to zero by dominated convergence theorem. We have shown that~\eqref{loc_3} holds. It implies that there exists a function $f^T(X,Y,t,r)$ such that 
\begin{eqnarray}
\mathbb{E}\Big(R^{T,3}_t\Big)^2 &\leq& \mathbb{E}\Big(\int_0^t\int_0^{\widetilde{\lambda}^T_{r-}}\mathbb{E}_{X,Y}\Big(f^T(X,Y,t,r)\Big)^2drdu\Big) \\
&=& \mathbb{E}\Big(\int_0^t\widetilde{\lambda}^T_{r-}\mathbb{E}_{X,Y}\Big(f^T(X,Y,t,r)\Big)^2dr\Big)\\
&=& \mathbb{E}\Big(\int_0^t\widetilde{\lambda}^T_{r}\mathbb{E}_{X,Y}\Big(f^T(X,Y,t,r)\Big)^2dr\Big),\\
\end{eqnarray}
with $\mathbb{E}_{X,Y}\Big(f^T(X,Y,t,r)\Big)^2$ being bounded and converging to zero as $T\rightarrow\infty$. Using Doob's inequality again we see that
\begin{equation}
\mathbb{E}\Big(\sup_{t\leq \tau}R^{T,3}_t\Big)^2 
\end{equation}
converges to $0$ as $T\rightarrow \infty$. This finishes the proof of the whole lemma.
\end{proof}

\begin{proof}[Proof of Lemma~\ref{BM_lem}]
By Theorem 8.23 in~\cite{PZ} the quadratic variation of $(W^T_t)$ is given by
\begin{eqnarray}
\big[W^T\big]_r & =& \frac{1}{T}\int_0^r\int_0^{\widetilde{\lambda}^T_{s-}}\int_{[0,K(T)]} \int_{[0,K(T)]} \\
&& \: \frac{1}{\widetilde{\lambda}_{s-}^T}x^2y^2\theta(t-r)^2M(Tds,Tdu,dx,dy).
\end{eqnarray}
Its expected value equals
\begin{equation}
\mathbb{E}\big(\big[W^T\big]_r\big)= r \mathbb{E}\big(X^2\mathbf{1}_{\{X\leq K(T)\}}\big)\big(Y^2\mathbf{1}_{\{Y\leq K(T)\}}\big),
\end{equation}
and its variance is given by
\begin{equation}
\mathbf{Var}\big(\big[W^T\big]_r\big) \int_0^r \mathbb{E}\left(\frac{1}{\widetilde{\lambda}^T_{s}}\right)\frac{1}{T^2}\mathbb{E}\big(X^4\mathbf{1}_{\{X\leq K(T)\}}\big)\mathbb{E}\big(Y^4\mathbf{1}_{\{Y\leq K(T)\}}\big),
\end{equation}
Since $\widetilde{\lambda}^T_{s}\geq \lambda_0 /T$ for any $s>0$, we have
\begin{equation}
\mathbf{Var}\big(\big[W^T\big]_r\big) \leq r \frac{1}{\lambda_0}\frac{1}{T}\mathbb{E}\big(X^4\mathbf{1}_{\{X\leq K(T)\}}\big)\mathbb{E}\big(Y^4\mathbf{1}_{\{Y\leq K(T)\}}\big).
\end{equation}
We can always choose $K(T)$ in a way that
\begin{equation}
\lim_{T\rightarrow \infty} \frac{1}{T}\mathbb{E}\big(X^4\mathbf{1}_{\{X\leq K(T)\}}\big)\mathbb{E}\big(Y^4\mathbf{1}_{\{Y\leq K(T)\}}\big) = 0,
\end{equation}
which we do. This means that for any $r>0$ $\big[W^T\big]_r$ converges in probability to $\sigma_1\sigma_2r$. This implies (see Theorem 2.1 in~\cite{WHITT}) that $(W^T_t)$ converges weakly in $\mathcal{D}[0,\tau]$ to $(\sigma_1 \sigma_2 W_t)$.
\end{proof}

\begin{comment}
\begin{proof}[Proof of Lemma~\ref{lambda_tight}]
By Theorem 8.23 in~\cite{PZ} the process
\begin{eqnarray}
A_t^T&:=& \int_0^t\int_0^{\widetilde{\lambda}^T_{r-}}\int_0^\infty\int_0^\infty xy\frac{1}{m}\exp\big(-\frac{\lambda}{m}(t-r)\big)\\
&& \:\frac{1}{T}\widetilde{M}(Tdr,Tdu,dx,dy)
\end{eqnarray}
is a square integrable martingale with respect to the filtration generated by the Poisson random measure $M$. Therefore we have by Doob's inequality
\begin{eqnarray}
\mathbb{E}\big(\sup_{s\leq t}|\A_s^T|
\end{comment}
\end{appendices}

\bibliographystyle{plain}

\bibliography{publications}

\end{document}